\setlist[enumerate,1]{label=(\arabic*), ref=(\arabic*)}
\setlist[enumerate,3]{label=(\roman*), ref=(\roman*)}
\theoremstyle{plain}
\newtheorem{theorem}{Theorem}[section]
\newtheorem{lemma}[theorem]{Lemma}
\newtheorem{conjecture}[theorem]{Conjecture}
\newtheorem{claim}{Claim}[theorem]
\newtheorem*{claim*}{Claim}
\newenvironment{claimproof}[1][Proof]{\par
	\pushQED{\qed}%
	
	\normalfont \topsep6\p@\@plus6\p@\relax
	\trivlist
	\item[\hskip\labelsep
	\textit{#1}\@addpunct{.}~]\ignorespaces
}{%
	\popQED\endtrivlist\@endpefalse
}
\newlist{Cases}{enumerate}{3}
\setlist[Cases]{parsep=0pt plus 1pt}
\setlist[Cases,1]{wide=0pt, listparindent=\parindent,
    label = \textbf{Case~\arabic*:}, ref = \arabic*}
\setlist[Cases,2]{wide=\parindent, listparindent=\parindent,
    label = \textbf{Case~\arabic{Casesi}-\arabic{Casesii}:}}
\crefname{Casesi}{case}{cases}
\newcounter{case}
\crefname{case}{case}{cases}
\theoremstyle{definition}
\newtheorem{definition}[theorem]{Definition}
\newcommand{\calP}{\mathcal{P}}
\newcommand{\calT}{\mathcal{T}}
\newcommand{\calY}{\mathcal{Y}}
\newcommand{\calZ}{\mathcal{Z}}
\newcommand{\ve}{\varepsilon}
\NewDocumentCommand{\xsideset}{mmme{_^}}{%
  \mathop{%
    \settowidth{\dimen0}{$\m@th\displaystyle#3$}%
    \dimen0=.5\dimen0
    \settowidth{\dimen2}{$%
      \m@th\displaystyle#3%
      \IfValueT{#4}{_{#4}}%
      \IfValueT{#5}{^{#5}}%
    $}%
    \dimen2=.5\dimen2
    \advance\dimen2 -\dimen0
    \sbox6{\scriptspace\z@$\displaystyle{\vphantom{#3}}#1$}
    \sbox8{\scriptspace\z@$\displaystyle{\vphantom{#3}}#2$}
    \ifdim\wd6>\dimen2 \kern\dimexpr\wd6-\dimen2\relax\fi
    {%
     \mathop{\llap{\copy6}{\displaystyle#3}\rlap{\copy8}}\limits
     \IfValueT{#4}{_{#4}}%
     \IfValueT{#5}{^{#5}}%
    }%
    \ifdim\wd8>\dimen2 \kern\dimexpr\wd8-\dimen2\relax\fi
  }%
}
\newcommand{\defeq}{\coloneqq}
\let\originalleft\left
\let\originalright\right
\renewcommand{\left}{\mathopen{}\mathclose\bgroup\originalleft}
\renewcommand{\right}{\aftergroup\egroup\originalright}
\title{Spanning subdivisions in dense digraphs}
\author{
Hyunwoo Lee%
        \thanks{Department of Mathematical Sciences, KAIST, South Korea and Extremal Combinatorics and Probability Group
(ECOPRO), Institute for Basic Science (IBS).
        E-mail: {\ttfamily hyunwoo.lee@kaist.ac.kr.} Supported by the National Research Foundation of Korea (NRF) grant funded by the Korea government(MSIT) No. RS-2023-00210430, and the Institute for Basic Science (IBS-R029-C4).}
}
\begin{document}
\maketitle

\begin{abstract}
    We prove that an $n$-vertex digraph $D$ with minimum semi-degree at least $\left(\frac{1}{2} + \varepsilon \right)n$ and $n \geq C m$ contains a subdivision of all $m$-arc digraphs without isolated vertices. Here, $C$ is a constant only depending on $\varepsilon.$ This is the best possible and settles a conjecture raised by Pavez-Sign\'{e}~\cite{PavezSigne2023SpanningSI} in a stronger form.
\end{abstract}

\section{Introduction}\label{sec:intro}

Embedding a spanning sparse (di)graph into a dense (di)graph is one of the central topics in extremal combinatorics and has been extensively studied over decades.
Arguably, the most natural condition is the minimum degree condition. For example, a classical result of Dirac~\cite{dirac} asserts that for every positive integer $n \geq 3$, any $n$-vertex graph with minimum degree at least $\frac{n}{2}$ contains a Hamiltonian cycle. 

Inspired by Dirac's theorem, there were various problems and results regarding the minimum degree threshold for the existence of certain spanning structures. One direction is for perfect tilings problem. For graphs $G$ and $H$, we say $G$ has a \emph{perfect $H$-tiling} if $G$ contains a collection of vertex disjoint copies of $H$ that covers every vertex of $G.$ For a complete graph $K_r$ on $r$ vertices, Hajnal and Szemer\'{e}di~\cite{hajnal1970proof} proved that every $n$-vertex graph $G$ with minimum degree at least $\frac{r-1}{r}n$, where $n$ is divisible by $r$, contains a perfect $K_r$-tiling. For a general graph $H$, K\"{u}hn and Osthus~\cite{kuhn2009minimum} determined the exact minimum degree threshold for the existence of perfect $H$-tiling up to an additive constant.  

Since then, many variants of the K\"{u}hn-Osthus theorem on graph tilings have been considered (for instance, see \cite{balogh2022tilings,han2021tilings,hyde2019degree,lo2015f}). Recently, the author~\cite{lee2023perfect} considered a new variant of the tiling problem: subdivision tilings. we say that $H'$ is a \emph{subdivision} of $H$ if $H'$ is obtained by replacing the edges of $H$ with vertex-disjoint paths. For graphs $G$ and $H$ we say that $G$ has a \emph{perfect $H$-subdivision tiling} if $G$ contains a collection of vertex-disjoint copies of subdivisions of $H$ that covers all vertices of $G.$ The asymptotic behavior of the tight minimum degree threshold for the existence of perfect $H$-subdivision tilings was determined for all graphs $H$ in~\cite{lee2023perfect}.

We note that indeed, Dirac's theorem determined the minimum degree threshold for the existence of one spanning $K_3$-subdivision. On the other hand, for a graph $H$, \cite{lee2023perfect} determined the minimum degree threshold for the existence of a spanning graph consisting of vertex-disjoint copies of subdivisions of $H.$ Comparing these two results, it is very natural to ask for the minimum degree threshold for the existence of a spanning subdivision of general graphs $H.$ Very recently, Pavez-Sign\'{e}~\cite{PavezSigne2023SpanningSI} proved that for every $\ve > 0$, there is a constant $C$ such that every graph $G$ on $n \geq Ck^2$ vertices with minimum degree at least $\left(\frac{1}{2} + \ve \right)n$ contains a spanning subdivision of $K_k$ for any positive integer $k.$ More generally, they showed that if $H$ is a $k$-vertex $d$-regular graph with $\log k \leq d \leq k$, then the same result holds for a spanning subdivision of $H$ as long as $n \geq Cdk.$ For general graphs, they proposed the following natural conjecture.

\begin{conjecture}[\cite{PavezSigne2023SpanningSI}]\label{conj:conj}
    For every $\ve > 0$, there is a constant $C$ such that the following holds. For every graph $G$ on $n \geq Cm$ vertices, if the minimum degree of $G$ is at least $\left(\frac{1}{2} + \ve \right)n$, then $G$ contains a spanning subdivision of any $m$-edge graph $H$ with no isolated vertices.
\end{conjecture}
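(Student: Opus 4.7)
The plan is to prove the stronger digraph statement announced in the abstract and to recover \Cref{conj:conj} by orienting every edge of $G$ in both directions: the resulting digraph $D$ has $\delta^0(D)\geq(\tfrac{1}{2}+\varepsilon)n$, and a directed spanning subdivision in $D$ of any fixed orientation of $H$ yields an undirected spanning subdivision of $H$ in $G$. It therefore suffices to show that for every $\varepsilon>0$ there is $C=C(\varepsilon)$ such that any $n$-vertex digraph $D$ with $\delta^0(D)\geq(\tfrac{1}{2}+\varepsilon)n$ and $n\geq Cm$ contains a spanning subdivision of every $m$-arc digraph $H$ without isolated vertices.

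I would proceed via absorption. First, fix an injective embedding $\phi\colon V(H)\hookrightarrow V(D)$ of the branch vertices; since $|V(H)|\leq 2m\leq 2n/C$, this uses only a negligible fraction of $V(D)$, and the remaining free vertices must be partitioned as internal vertices of $m$ internally vertex-disjoint directed paths, one from $\phi(u)$ to $\phi(v)$ per arc $uv\in E(H)$. I would next set aside a small random reservoir $R\subseteq V(D)$ of size $\Theta(\varepsilon n)$, which with high probability inherits the semi-degree hypothesis. Because $|N^+(x)\cap N^-(y)|\geq 2\varepsilon n$ for every ordered pair $(x,y)$, a standard iteration produces a short-linking lemma: any two vertices can be joined by a directed path of bounded length whose internal vertices lie in $R$. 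Using short-linking I would build, for every free vertex $w\notin R\cup\phi(V(H))$, a small absorbing gadget inside $R$ that can be spliced into a designated subdivision path so as to include $w$ while preserving its endpoints; concatenated, these form a global absorbing structure tolerating a small leftover. After reserving the absorbing structure, the remaining free vertices are distributed into $m$ buckets of prescribed sizes summing to the correct total, and within each bucket a Hamilton-type path from $\phi(u)$ to $\phi(v)$ is constructed via a prescribed-endpoint Ghouila-Houri / Chv\'{a}tal--Erd\H{o}s variant valid at minimum semi-degree $(\tfrac12+\tfrac\varepsilon2)$; any leftover is finally swallowed by the absorber.

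The principal obstacle is handling shared branch vertices: a vertex $\phi(v)$ of large $H$-degree must accommodate $d_H(v)$ pairwise arc-disjoint path-attachments on its in- and out-neighbourhoods. This forces us to reserve, for each arc of $H$ incident to $v$, a private slice of $N^+(\phi(v))\cup N^-(\phi(v))$ before the buckets are chosen, which is exactly where the assumption $n=\Omega_\varepsilon(m)$ is consumed: the $(\tfrac12+\varepsilon)n$ in- and out-neighbours of $\phi(v)$ must distribute among at most $m$ incident arcs, which leaves ample slack once $C$ is large. Coordinating these private attachments with the Hamilton-type path-finding inside each bucket, while simultaneously keeping the absorbing structure intact and avoiding collisions between high-degree branch vertices, is the delicate accounting step; the rest reduces to standard absorption book-keeping.
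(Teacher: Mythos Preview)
Your overall absorption framework and the reduction from the undirected conjecture to the digraph theorem match the paper. However, the core of your argument diverges from the paper's, and the divergence is exactly where your proposal runs into trouble.

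You propose to distribute the free vertices into $m$ buckets and run a prescribed-endpoint Ghouila-Houri variant inside each bucket. This does not work as stated: when $m$ is close to $n/C$, a typical bucket has only $\Theta(C)$ vertices, and there is no mechanism by which a constant-size induced subdigraph inherits the relative semi-degree condition $(\tfrac12+\tfrac{\varepsilon}{2})|\text{bucket}|$ from the global hypothesis on $D$. So the Hamilton-path step inside small buckets is unjustified. Relatedly, the ``principal obstacle'' you identify---allocating private neighbourhood slices at a high-degree branch vertex $\phi(v)$ to its $d_H(v)$ incident subdivision paths---is an artefact of this bucketing scheme, not an intrinsic difficulty of the problem.

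The paper sidesteps both issues with one idea: it designates a \emph{single} arc $\overrightarrow{x_1x_2}\in A(H)$ to carry almost all of $V(D)$. After setting aside the absorber path $A$ and a connector set $R$, and placing the branch vertices $v_1,\dots,v_k$ arbitrarily, it applies Ghouila-Houri once, to the entire remainder $D_3$, obtaining a Hamilton path there; three connector vertices from $R$ splice this path and the absorber $A$ into a long $v_1\to v_2$ path. Every other arc $\overrightarrow{x_ax_b}$ of $H$ is then realised as a length-two path $v_a\, z\, v_b$ using a single connector vertex $z\in R$, chosen greedily since $|N_D(v_a,v_b)\cap R|\geq \gamma n\geq m+2$. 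The unused part of $R$ (of size at most $\beta n$) is finally absorbed into $A$. In particular, no branch vertex ever needs more than $d_H(v)$ single-vertex attachments in $R$, and the ``private slice'' bookkeeping you worry about never arises.
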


Note that if $H$ has arbitrarily many isolated vertices, then whatever the size of $G$ is, we cannot embed a spanning subdivision of $H.$ Thus the condition, $H$ does not have isolated vertices, is natural. Moreover, $n \geq Cm$ is necessary. For instance, let $G$ be a balanced complete $3$-partite graph on $n = \frac{t^2}{10}$ vertices and $H$ is a clique on $t$ vertices. Then the minimum degree of $G$ is at least $\frac{2n}{3} - 1.$ Since $G$ does not contain a $K_4$, thus if $G$ contains a spanning subdivision $H'$ of $H$, then $H'$ should be a $K_4$-free. By Tur\'{a}n's theorem, at least $\binom{t}{2} - \frac{t^2}{3} > \frac{t^2}{10}$ edges of $H$ should be subdivided. This implies that $|V(H')| > \frac{t^2}{10} = |V(G)|.$ Thus $G$ does not contain a spanning subdivision of $H.$

In this article, we prove \Cref{conj:conj} in a stronger form considering digraphs. Let $H$ be a digraph. We say a digraph $H'$ is a subdivision of $H$ if $H'$ is obtained by replacing arcs of $H'$ with vertex-disjoint directed paths with the same consistent orientation. For a digraph $D$ and $v\in D$, the \emph{semi-degree} of $v$ in $D$ is the minimum of the out-degree of $v$ and in-degree of $v$ in $D.$

\begin{theorem}\label{thm:main}
    Let $\ve > 0$ be a positive real number. Then there is a constant $C > 0$ such that the following holds.
    Let $D$ be a digraph on $n \geq Cm$ vertices with minimum semi-degree at least $\left(\frac{1}{2} + \ve \right)n.$ Then $D$ contains a spanning subdivision of any $m$-arc digraph $H$ with no isolated vertices.
\end{theorem}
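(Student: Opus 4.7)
The plan is to follow an absorption-based strategy. Write $k := |V(H)|$, which is at most $2m$ since $H$ has no isolated vertex, and enumerate the arcs of $H$ as $e_1,\dots,e_m$ with $e_\ell = (a_\ell, b_\ell)$. I aim to select branch vertices $x_1,\dots,x_k \in V(D)$, one for each vertex of $H$, and then build internally disjoint directed paths $P_\ell$ from $x_{a_\ell}$ to $x_{b_\ell}$ whose union covers $V(D)$.

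The basic tool I would set up first is a directed Dirac-type connecting lemma: in any digraph $D'$ on $n'$ vertices with minimum semi-degree at least $(\tfrac{1}{2} + \tfrac{\ve}{4}) n'$, for any two distinct vertices $u,v$, any integer $\ell$ in a wide range $3 \le \ell \le \gamma n'$, and any forbidden set $F$ of size at most $\ve n'/10$, there is an internally-disjoint directed $u$-$v$ path of prescribed length $\ell$ in $D' - F$. Such a statement follows from iterated Ghouila-Houri-style arguments that exploit the $\ve n$ slack, and plays the role of a linkage result in the undirected setting.

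The construction then proceeds in three stages. First, randomly select the branch set $B = \{x_1,\dots,x_k\}$ and a reservoir $R \subseteq V(D) \setminus B$ of size $\Theta(\ve n)$, so that with positive probability $D - B - R$ inherits the semi-degree condition (with slightly smaller $\ve$) and every $x_i$ retains close to $(\tfrac{1}{2} + \tfrac{\ve}{2})n$ in- and out-neighbors outside $B$. Second, for each arc $e_\ell$, build greedily and internally disjointly a short directed $x_{a_\ell}$-$x_{b_\ell}$ path $A_\ell$ in $D - B - R$ equipped with an absorption gadget that can swallow any subset of up to $O(|R|/m)$ reservoir vertices at designated internal sites; a directed adaptation of Montgomery's distributive absorption, matched through a Hall-type argument on the bipartite graph of candidate absorption sites, accomplishes this. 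Third, setting $U := V(D) \setminus (B \cup R \cup \bigcup_\ell \mathrm{int}(A_\ell))$, partition $U$ into blocks $U_1,\dots,U_m$ of prescribed sizes matching the target path lengths, and use the connecting lemma iteratively to expand each $A_\ell$ into a long path $P_\ell^\circ$ from $x_{a_\ell}$ to $x_{b_\ell}$ with internal vertex set $\mathrm{int}(A_\ell) \cup U_\ell$; the hypothesis of the connecting lemma is maintained inductively because each extension consumes only an $O(\ve n/m)$ portion of degree. Finally, any leftover $R' \subseteq R$ is distributed among the absorbers, and swapping each $A_\ell$ inside $P_\ell^\circ$ for its swollen absorbing version yields a spanning subdivision of $H$ in $D$.

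The main obstacle I anticipate is the sequential routing stage: when $m$ is comparable to $n/C$, each block $U_\ell$ has only constant size, so the connecting lemma must tolerate short target lengths while avoiding a large forbidden set formed by previously-placed paths. This is presumably where the hypothesis $n = \Omega_\ve(m)$ is used in a tight way, and it forces both short absorbers and careful degree accounting at the branch vertices $x_i$ with large degree in $H$, which get reused many times as path endpoints. A likely resolution is to interleave absorber construction and the rough cover so that absorption sites are chosen adaptively during the sequential embedding, rather than fixed in advance. The directed setting adds only the mild subtlety of tracking in- and out-degrees separately, but the $(\tfrac{1}{2} + \ve)n$ semi-degree hypothesis is precisely calibrated for the directed Dirac machinery to go through without essential modification from the undirected case of \cite{PavezSigne2023SpanningSI}.
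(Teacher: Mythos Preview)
Your proposal takes a genuinely different route from the paper, and the difference is instructive. You distribute the spanning task across all $m$ arcs of $H$: each arc gets its own absorbing segment $A_\ell$ and its own block $U_\ell$ of roughly $n/m$ vertices, glued together by a connecting lemma of prescribed length. The paper does the opposite: it concentrates almost the entire vertex set into a \emph{single} subdivided arc. Concretely, it builds one absorber path $A$ (via \Cref{lem:absorber}), a small connector reservoir $R$ (via \Cref{lem:connect}), picks branch vertices $v_1,\dots,v_k$ arbitrarily, and then applies Ghouila--Houri's theorem once to find a Hamiltonian path $P$ in the leftover digraph $D_3$. One arc of $H$, say $\overrightarrow{x_1x_2}$, is then subdivided as $v_1\to R\to P\to R\to A\to R\to v_2$, swallowing all of $V(D_3)\cup V(A)$ at once; every other arc $\overrightarrow{x_ax_b}$ is subdivided by a length-$2$ path $v_a\to z\to v_b$ with $z\in R$. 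The unused part of $R$ is then absorbed by $A$.

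The obstacle you flag---that when $m$ is comparable to $n/C$ the blocks $U_\ell$ have constant size, forcing your connecting lemma to produce very short paths while avoiding a linearly growing forbidden set and handling branch vertices $x_i$ of high $H$-degree reused many times---is real, and your proposal does not resolve it (``interleave adaptively'' is a hope, not an argument). The paper's approach simply sidesteps this: only one arc needs to be long, so no prescribed-length connecting lemma, no distributive absorption, and no Hall-type matching are required; the entire ``long'' part is handled by a single black-box invocation of \Cref{thm:digraph-dirac}. What your approach would buy, if completed, is finer control over the individual path lengths in the subdivision, but that is not needed for the theorem as stated.
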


By replacing edges of an $n$-vertex graph $G$ with minimum degree at least $\left(\frac{1}{2} + \ve \right)n$ with two arcs in both directions, it is straightforward to check that \Cref{thm:main} implies \Cref{conj:conj}.

The minimum semi-degree condition in \Cref{thm:main} is asymptotically best possible for large $n$ compared with $m.$ Let $H$ be an $m$-arc digraph on $k$ vertices without isolated vertices. We note that $k \leq 2m.$ Let $A$ and $B$ be disjoint sets with $|A| = \lfloor\frac{n}{2} \rfloor - (m + k)$ and $|B| = n - |A|.$ Let $D$ be a digraph on the vertex set $A\cup B$ such that the arc set of $D$ is $\{\overrightarrow{uv}, \overrightarrow{vu}: (u, v)\in A\times B\}.$ Then the minimum semi-degree of $D$ is at least the size of $A$, that is $\lfloor \frac{n}{2}\rfloor - (m + k).$ We now claim that $D$ does not contains a spanning subdivision of $H.$ Assume $D$ has a spanning subdivision of $H.$ Then at least $n - k$ vertices of $D$ are covered by at most $m$ pairwise vertex disjoint directed paths. Since the underlying graph of $D$ is a bipartite graph on the bipartition $(A, B)$, the inequality $||A| - |B|| \leq m + k$ must hold, a contradiction. Thus, $D$ does not contain a spanning subdivision of $H.$

Since a directed Hamiltonian cycle is a spanning subdivision of a directed $2$-cycle, we can consider \Cref{thm:digraph-dirac} as the asymptotic generalization of the following classical digraph analog to Dirac's theorem.

\begin{theorem}[Ghouila-Houri~\cite{ghouilahouri1960condition}]\label{thm:digraph-dirac}
    Let $D$ be an $n$-vertex digraph with minimum semi-degree at least $\frac{n}{2}.$ Then $D$ contains a directed Hamiltonian cycle.
\end{theorem}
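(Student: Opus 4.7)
My plan is to adapt the classical longest-cycle argument for Dirac's theorem to the directed setting. First, I would verify strong connectivity: if $V(D) = A \sqcup B$ admitted no arc from $A$ to $B$, then every $a \in A$ would satisfy $d^+(a) \leq |A|-1$, forcing $|A| \geq n/2+1$; symmetrically, using in-degrees of vertices in $B$, $|B| \geq n/2+1$, contradicting $|A|+|B|=n$.

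Next, let $C = v_1 v_2 \cdots v_p v_1$ be a longest directed cycle in $D$ and suppose for contradiction $p < n$. For each $u \notin V(C)$, set $A_u \defeq \{i \in [p] : v_i u \in A(D)\}$ and $B_u \defeq \{i \in [p] : u v_{i+1} \in A(D)\}$ with indices read modulo $p$. Any index in $A_u \cap B_u$ produces a cycle of length $p+1$ by inserting $u$ between $v_i$ and $v_{i+1}$, contradicting the maximality of $C$; hence $A_u \cap B_u = \emptyset$. The semi-degree hypothesis forces $|A_u|, |B_u| \geq p+1-n/2$, so $2(p+1-n/2) \leq p$, yielding $p \leq n-2$. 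In particular, the case $p = n-1$ is ruled out immediately, since then $|A_u|+|B_u| \geq n > p$ would force a common index.

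For the residual range $p \leq n-2$, I would pass to an \emph{ear}: by strong connectivity there exists a path $P = c_0 u_1 \cdots u_s c_1$ with $c_0, c_1 \in V(C)$, internal vertices $u_j \notin V(C)$, and $s \geq 1$; pick such $P$ with $s$ as large as possible. Writing $I = \{i : v_i u_1 \in A(D)\}$ and $J = \{j : u_s v_j \in A(D)\}$, any pair $(i, j) \in I \times J$ with $(j - i) \bmod p \in [1, s]$ yields the longer cycle $v_j v_{j+1} \cdots v_i u_1 \cdots u_s v_j$, of length $p + s + 1 - ((j-i) \bmod p) > p$, contradicting maximality of $C$. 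Therefore $J$ avoids the cyclic shift-window $I + [1, s]$ inside $\mathbb{Z}/p$. I would then combine the semi-degree lower bounds $|I|, |J| \geq p + 1 - n/2$ (sharpened using that the maximality of $s$ restricts in-neighbors of $u_1$ and out-neighbors of $u_s$ off $V(C) \cup V(P)$) with an interval Cauchy--Davenport-type estimate $|I + [1,s]| \geq \min(p, |I| + s - 1)$ to force $|I + [1, s]| + |J| > p$, which contradicts disjointness.

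The main obstacle is the final cyclic pigeonhole step. Unlike in the undirected Dirac proof, we cannot reverse sub-paths to perform P\'{o}sa-style rotations, so every potential contradiction must be routed through an ear, and the tight regime lives at $s \approx n - p$ (when the ear must exhaust essentially all off-cycle vertices). Balancing the degree bounds on $|I|, |J|$ against the window size $s$ and the cycle length $p$---while correctly exploiting the maximality of $s$ to restrict where the endpoints' neighborhoods can lie---is where the delicate bookkeeping sits, and is the step I would need to execute most carefully.
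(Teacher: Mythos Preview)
The paper does not prove this statement at all: Theorem~\ref{thm:digraph-dirac} is quoted as Ghouila-Houri's classical result and is used as a black box in the proof of Theorem~\ref{thm:main}. So there is no ``paper's own proof'' to compare your attempt against.

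On the substance of your sketch: the overall strategy is sound, and most pieces are correct. Strong connectivity follows as you say; the insertion argument cleanly rules out $p=n-1$; and the interval estimate $|I+[1,s]|\ge\min(p,\,|I|+s-1)$ is valid in $\ZZ/p\ZZ$ for every $p$ (no primality needed --- it follows from an elementary gap count, since the complement of $I+[1,s]$ contributes $\max(0,g-s+1)$ for each gap of $I$ of length $g$).

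The real gap is exactly where you flag the ``delicate bookkeeping''. Maximality of the \emph{ear} length $s$ does not, by itself, force $N^-(u_1)\subseteq V(C)\cup V(P)$: if some $w\notin V(C)\cup V(P)$ has $w\to u_1$, building a longer ear would require a directed path from $V(C)$ to $w$ inside $D-\{u_1,\dots,u_s\}$, and strong connectivity of $D$ gives you no control over that subgraph. Without this restriction your sharpened lower bounds on $|I|$ and $|J|$ are unjustified, and with only the crude bound $|I|,|J|\ge p+1-n/2$ your final inequality yields $p+s\ge n$, which is exactly the borderline and not a contradiction. A clean repair is to replace the maximal ear by the longest directed path $u_1\cdots u_s$ in $D-V(C)$. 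Maximality of that \emph{path} immediately gives $N^-(u_1)\subseteq V(C)\cup\{u_2,\dots,u_s\}$ and $N^+(u_s)\subseteq V(C)\cup\{u_1,\dots,u_{s-1}\}$, hence $|I|,|J|\ge \lceil n/2\rceil-(s-1)$. You should also first record $p\ge\lceil n/2\rceil+1$ (via the longest path in $D$: its terminal vertex has all $\ge\lceil n/2\rceil$ out-neighbours on the path, yielding a cycle of that length), so that $s\le n-p\le\lfloor n/2\rfloor-1$ and $|I|,|J|\ge 2$. Then disjointness of $J$ and $I+[1,s]$ gives $2(\lceil n/2\rceil-s+1)+(s-1)\le p$, i.e.\ $s\ge n-p+1$, contradicting $s\le n-p$. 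With this change of extremal object the argument closes; as written, the maximal-ear step is where it breaks.
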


Inspired by \Cref{thm:digraph-dirac}, embedding (arbitrarily) oriented Hamiltonian cycles and tree-like structures were extensively studied~\cite{antidirected,arbitrary-oriented,tree-like}.

The proof of the main theorem uses the \emph{absorption method} popularized by R\"{o}dl, Ruci\'{n}ski, and Szemer\'{e}di~\cite{rodl2006dirac} and implicitly used already earlier in~\cite{erdHos1991vertex,krivelevich1997triangle}. The absorption method is an extremely useful tool for finding spanning structures. In order to get more information about the absorption method, we recommend seeing the survey~\cite{zhao2016recent}.

\section{Notations}\label{sec:prelim}
We write $[n] = \{1, 2, \dots , n\}.$ For a statement regarding some parameters $\beta, \alpha_1, \dots, \alpha_t$, we say that it holds when $0 < \beta \ll \alpha_1, \dots, \alpha_t < 1$ if there exists a non-decreasing function $f$ such that for every $0 < \beta \leq f(\alpha_1, \dots, \alpha_t)$ the statement holds. In this article, we will not compute these functions explicitly.

For a digraph $D$, we let $V(D)$ be the vertex set of $D$ and $A(D)$ be the arc set of $D.$
For a vertex $v$ in a directed graph $D$, we denote by $N^+_D(v)$ and $N^-_D(v)$ the out-neighborhood and in-neighborhood of $v$, respectively. We write $d^+_D(v)$ and $d^-_D(v)$ as the size of the out-neighborhood and in-neighborhood of $v$ and we call them out-degree and in-degree of $v$, respectively. We denote by $\delta^0(D)$ the minimum semi-degree, that is $\min\{\delta^+(D), \delta^-(D)\}.$

For a vertex subset $X$ and $Y$, we let $A_D[X, Y]$ be the set of arcs starting from $X$ and ending at $Y.$ We note that $X$ and $Y$ do not need to be disjoint. For a pair of vertices $(u, v) \in V(D)\times V(D)$, we define $N_D(u, v) \defeq \{w\in V(D): \overrightarrow{uw}, \overrightarrow{wv} \in A(D)\}.$ We denote by $d_D(u, v)$ the number $|N_D(u, v)|.$

For a digraph $D$ and vertex subsets $X, Y\subseteq V(D)$, let $D - X + Y$ be a digraph that is an induced digraph of $D$ on the vertex subset $(V(D)\setminus X)\cup Y.$

Let $X$ be a set and $t$ be a positive integer. We write $X^t$ be the $t$-fold cartesian product of $X$, that is $X\times \cdots \times X.$ We say two elements $(y_1, \dots, y_t), (z_1, \dots, z_t)$ of $X^t$ are tuple-disjoint if $\{y_1, \dots, y_t\}\cap \{z_1, \dots, z_t\} = \emptyset.$

\section{$(n, t ,d )$-tuple systems}\label{sec:ntd-system}
In this section, we will prove the most technical lemma. Our proof strategy is applying the absorption method in the digraph, we need to build a suitable collection of structures that can be used in the absorbing process. Since our host structure is the digraph, we need to consider the direction of the structure, thus we consider the following definition, which can handle directed structures. 

\begin{definition}
    Let $n$ and $t$ be two positive integers and let $d$ be a real number in $(0, 1).$ For two sets $X$ and $Y$ with $|X| \leq n^2$ and $|Y| = n$, we say a collection of pairs $\calP \subseteq X\times Y^t$ is an \emph{$(n, t, d)$-tuple system} on $(X, Y)$ if $\calP$ satisfies the following:
    For each element $x \in X$, the size of the set $\{Z\in Y^t: (x, Z)\in \calP\}$ is at least $dn^t.$
\end{definition}
We consider $(x, Z)\in X\times Y^t$ as a pair not a $(t+1)$-tuple. The collection $\calP$ will be used to construct a connector and absorbing path in the proof of \Cref{thm:main}.

Let $\calP$ be the $(n, t, d)$-tuple system on $(X, Y).$ Then by the definition of $(n, t, d)$-tuple system, every elememt $x\in X$ has many $t$-tuples $Z\in Y^t$ such that $(x, Z)\in \calP.$ Thus if we choose a collection $\calY$ of vertex-disjoint copies of members in $Y^t$ carefully, then we may expect that each $x\in X$ still have sufficiently many members $Z\in \calY$ such that $(x, Z)\in \calP.$
The following lemma, which is the key lemma for the proof of \Cref{thm:main}, states that this intuition is true.

\begin{lemma}\label{lem:ntd-absorbing}
    Let
    $$0 < \frac{1}{n}, \beta \ll d, \alpha, \frac{1}{t} \leq 1$$ where $n$ and $t$ are integers.

    Let $\calP$ be an $(n, t, d)$-tuple system on the pair of sets $(X, Y)$ with  $|X| \leq n^2$ and $|Y| = n.$ Then there is a collection of subsets $\calY \subseteq Y^t$ that satisfies the following properties:
    \begin{enumerate}
        \item[$\bullet$] Elements in $\calY$ are pairwise tuple-disjoint,
        \item[$\bullet$] the size of $\calY$ is at most $\frac{\alpha}{t}n,$
        \item[$\bullet$] for every $x\in X$, the number of elements $Z \in \calY$ where $(x, Z)\in \calP$ is at least $\beta n.$
    \end{enumerate}
\end{lemma}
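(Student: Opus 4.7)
The plan is a one-step alteration argument. Include each tuple $Z \in Y^t$ in an initial random collection $\calY_0$ independently with probability $p \defeq 2\beta/(dn^{t-1})$; then show with positive probability that $\calY_0$ is small, covers every $x \in X$ well, and has few coordinate-conflicts; and finally delete one tuple from each conflicting pair to obtain $\calY$.

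With this $p$ we have $\mathbb{E}[|\calY_0|] = pn^t = 2\beta n/d$, and for each $x \in X$ the expected number of $Z \in \calY_0$ with $(x,Z) \in \calP$ is at least $p \cdot dn^t = 2\beta n$. Since $1/n \ll \beta$, a Chernoff bound yields that each of these counts lies within a $1/10$ factor of its mean except with probability $\exp(-\Omega(\beta n))$. A union bound over the $|X|+1 \leq n^2 + 1$ concentration events shows that they all hold simultaneously with probability $1 - o(1)$; in particular $|\calY_0| \leq 3\beta n/d \leq \alpha n/(2t)$ (using $\beta \ll d\alpha/t$), and each $x$ sees at least $\tfrac{9}{5}\beta n$ good tuples in $\calY_0$. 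On the other hand, the number of unordered pairs in $Y^t$ sharing at least one coordinate is at most $t^2 n^{2t-1}$, so the expected number of such conflicting pairs in $\calY_0$ is at most $p^2 t^2 n^{2t-1} = 4\beta^2 t^2 n/d^2$, which is much smaller than $\beta n / 4$ because $\beta \ll d^2/t^2$. By Markov's inequality, with probability at least $1/2$ the number of conflicts is below $\beta n/2$.

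Fixing an outcome meeting all four bounds and deleting one tuple from every conflicting pair yields a tuple-disjoint family $\calY$ of size at most $\alpha n/t$ in which each $x \in X$ retains at least $\tfrac{9}{5}\beta n - \beta n/2 \geq \beta n$ good tuples, as required. I do not anticipate a real obstacle: all three targets reduce to polynomial inequalities in $\beta, d, \alpha, 1/t$ that the given hierarchy $\beta \ll d, \alpha, 1/t$ delivers. The one subtlety worth flagging is that the Chernoff tails must beat a union bound over the at most $n^2$ elements of $X$, and this is precisely where the hypothesis $1/n \ll \beta$ enters, forcing $\beta n$ to dominate $\log n$.
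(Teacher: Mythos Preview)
Your argument is correct and is precisely the standard random-selection-plus-alteration proof that underlies this type of absorbing lemma. The paper itself omits the proof of this lemma entirely, pointing instead to Lemma~4.9 of \cite{lee2023perfect} and remarking that the argument carries over with only straightforward modification; that referenced proof is exactly the probabilistic scheme you outline (random inclusion with probability $p \approx \beta/(dn^{t-1})$, Chernoff for the per-$x$ counts and the total size, Markov for the number of coordinate-overlapping pairs, then deletion). So there is no meaningful difference in approach to report---you have reconstructed the intended proof.
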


Indeed, the similar notion with $(n, t, d)$-tuple systems and \Cref{lem:ntd-absorbing} were already considered in~\cite{lee2023perfect} in the notion of $(n, t, d)$-systems. The proof of \Cref{lem:ntd-absorbing} is almost the same as the proof of Lemma 4.9 in~\cite{lee2023perfect}. However, for completeness, we write the proof of \Cref{lem:ntd-absorbing} below. 

\begin{proof}[Proof of \Cref{lem:ntd-absorbing}]
    Let fix a constant $c = \min\{\frac{\alpha}{2}, \frac{d}{4t^2}\}.$ We now consider $\calT$ which is a random subset of $Y^t$, such that each element in $Y^t$ is selected independently at random with the probability $p = \frac{c}{n^{t - 1}}.$ Then we observe that the random variable $|\calT|$ follows a binomial distribution and the inequality $\mathbb{E}[|\calT|] \leq \frac{\alpha}{2t}n$ holds. Then by Chebyshev's inequality, with probability $1 - o(1),$ the following holds. 
    \begin{equation}\label{eq:1}
        |\calT| \leq \frac{\alpha}{t}n.
    \end{equation}

    For each $x\in X$, we denote by $\calZ_x$ the set $\{Z\in Y^t: (x, Z)\in \calP\}$. Then the size $|\calT \cap \calZ_x|$ is a random variable that follows a binomial distribution and we note that we have $\mathbb{E}[|\calT \cap \calZ_x|] \geq |\calZ_x|p \geq cd n.$ By Chernoff bound, for each $x\in X$, the size of the random set $\calT \cap \calZ_x$ is at least $\frac{cd}{2}n$ with probability $1 - o(n^{-3}).$ Since $|X| \leq n^2$, we have the following with probability $1 - o(1).$
    \begin{equation}\label{eq:2}
        |\calT \cap \calZ_x| \geq \frac{cd}{2}n \text{ for all } x\in Z.
    \end{equation}
    
    We now count the number of not tuple-disjoint pairs in $\calT.$ Let $I$ be a random variable which is the number of not tuple-disjoint pairs of $\calT.$ We note that we have $$|\{\{A, B\}: A, B\subseteq [n]^t,\text{ } A \text{ and }B \text{ is not a tuple-disjoint} \}| \leq t^2 n^{2t - 1}$$ and each $A$, $B$ are chosen to be in $\calT$ with probability $p = \frac{c}{n^{t-1}}.$ Thus, we have $\mathbb{E}[I] \leq c^2t^2n.$ By Markov's inequality, with probability at least $\frac{1}{2}$,
    \begin{equation}\label{eq:3}
        |I| \leq 2c^2t^2n.
    \end{equation}

    By the union bound, for all sufficiently large $n$, there is a family $\calT$ satisfies all \Cref{eq:1,eq:2,eq:3}.
    Let $\calY$ be a collection obtained from $\calT$ by removing not tuple-disjoint pairs from $\calT.$

    Then $\calY$ is a collection of pairwise tuple-disjoint sets and by \eqref{eq:1}, we have $|\calY| \leq |\calT| \leq \frac{\alpha}{t}n.$ By \eqref{eq:2} and \eqref{eq:3}, for all $x\in X$, the inequality $|\calY \cap \calZ_x| \geq \frac{cd}{2}n - 2c^2t^2n \geq \frac{\alpha d^2}{16t^2}n$ holds. We now set $\beta = \frac{\alpha d^2}{16t^2}.$ Then $\calY$ is the desired collection. This completes the proof. 
\end{proof}

We want to remark that the notion of $(n, t, d)$-systems, $(n, t, d)$-tuple systems, and \Cref{lem:ntd-absorbing} provide systematic frameworks for the absorption method that appeared in~\cite{rodl2006dirac}. We also remark that the proof of \Cref{lem:ntd-absorbing} is almost the same as the proof of Lemma 2.3 from~\cite{rodl2006dirac}.


\section{Proof of \Cref{thm:main}}

In this section, we will prove \Cref{thm:main}. Our strategy is the following. We first construct an `absorbing path' $A$, a directed path of short length such that for every small number of vertex set $U$ disjoint from $A$, there is another directed path $P$ with $V(P) = V(A) \cup U$ and the endpoints of $P$ are the same as $A.$ Thus, $A$ can absorb the vertices of $U$ in some sense. The second step is to choose a small vertex set, `connector', which is disjoint from the absorbing path such that we can connect two arbitrary vertices of the given digraph with a directed path of length two by using one vertex from the connector.

Once we obtain the absorbing path and the connector, the rest is simple.
We embed the branch vertices into arbitrary vertices disjoint from the absorbing path and the connector. Since the size of the absorbing path and the connector is small, the minimum semi-degree of the remaining digraph is still big, so we can find a long directed path by using \Cref{thm:digraph-dirac}. Then we connect branch vertices and the directed paths by using the vertices in the connector. Then we obtain a directed subdivision and a small number of vertices. The remaining vertices can be absorbed into the absorbing path, thus we can obtain the desired spanning directed subdivision.

The next lemma states that we can construct an absorbing path.

\begin{lemma}\label{lem:absorber}
    Let
    $$0 < \frac{1}{n} \ll \beta \ll \alpha \ll \ve < 1.$$

    Let $D$ be an $n$-vertex graph with $\delta^0(D) \geq \left(\frac{1}{2} + \ve \right)n.$ Then there is a directed path $A$ in $D$ satisfying $|V(A)| \leq \alpha n$ such that for every $U\subseteq V(G)\setminus A$ with $|U| \leq \beta n$, there is a directed path $P$ in $D$ on the vertex set $V(A)\cup U$ that has the same endpoints with the path $A.$
\end{lemma}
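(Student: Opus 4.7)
The plan is to build $A$ as a long directed path obtained by concatenating many short \emph{absorber gadgets} linked by single-vertex bridges drawn from a common reservoir. For $v\in V(D)$, call an ordered triple $(y_1,y_2,y_3)$ of distinct vertices in $V(D)\setminus\{v\}$ an \emph{absorber for $v$} if the four arcs $y_1y_2,\ y_2y_3,\ y_1v,\ vy_2$ all lie in $A(D)$. The point is local: wherever the consecutive segment $y_1\to y_2\to y_3$ appears in $A$, it can be replaced by $y_1\to v\to y_2\to y_3$ so as to absorb $v$ without changing the endpoints of $A$ or touching any vertex outside the triple.

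Introduce intermediate constants with $\beta\ll\beta_2\ll\alpha_2\ll\beta_1\ll\alpha_1\ll\alpha\ll\ve$. First, apply \Cref{lem:connect} with parameters $(\alpha_1,\beta_1)$ to obtain a reservoir $R\subseteq V(D)$ with $|R|\le\alpha_1 n$ and $|N_D(u,v)\cap R|\ge\beta_1 n$ for every distinct pair $(u,v)$. Next, the semi-degree hypothesis yields, for every $v\in V(D)$, at least $dn^3$ absorbers for $v$ for some $d=d(\ve)>0$: one may choose $y_1\in N^-_D(v)$ ($\ge(\tfrac{1}{2}+\ve)n$ options), $y_2\in N^+_D(y_1)\cap N^+_D(v)$ ($\ge 2\ve n$ options by inclusion-exclusion), and $y_3\in N^+_D(y_2)$ ($\ge(\tfrac{1}{2}+\ve)n$ options), discarding a negligible number of triples with repetitions. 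Thus $\mathcal{P}=\{(v,(y_1,y_2,y_3)):(y_1,y_2,y_3)\text{ is an absorber for }v\}$ is an $(n,3,d)$-tuple system on $(V(D),V(D))$, and \Cref{lem:ntd-absorbing} (invoked with parameters $\alpha_2,\beta_2$) produces a family $\calY\subseteq V(D)^3$ of pairwise tuple-disjoint absorbers with $|\calY|\le\alpha_2 n/3$ such that every $v\in V(D)$ is absorbed by at least $\beta_2 n$ members of $\calY$.

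Enumerate $\calY=\{T_1,\ldots,T_k\}$ with $T_i=(y_1^i,y_2^i,y_3^i)$ in an arbitrary order, and for $i=1,\ldots,k-1$ greedily pick a connector $w_i\in N_D(y_3^i,y_1^{i+1})\cap R$ distinct from every previously chosen $w_j$ and from every vertex appearing in some tuple of $\calY$. The forbidden set has size at most $3|\calY|+(i-1)\le 4\alpha_2 n/3$, which is dominated by $\beta_1 n\le|N_D(y_3^i,y_1^{i+1})\cap R|$ since $\alpha_2\ll\beta_1$, so such a $w_i$ exists. The concatenation
\[
A:\; y_1^1\to y_2^1\to y_3^1\to w_1\to y_1^2\to y_2^2\to y_3^2\to w_2\to\cdots\to y_3^k
\]
is then a directed path in $D$ on $4|\calY|-1\le\alpha n$ vertices.

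Finally, to verify the absorption property, fix $U\subseteq V(D)\setminus V(A)$ with $|U|\le\beta n$. Every $v\in V(D)$ has at least $\beta_2 n>\beta n\ge|U|$ absorbers in $\calY$, so Hall's theorem (or a one-line greedy) supplies an injection $\iota\colon U\to\calY$ with $T_{\iota(v)}$ absorbing $v$. For each $v\in U$, locally replace the sub-arc $y_1^{\iota(v)}\to y_2^{\iota(v)}$ of $A$ by $y_1^{\iota(v)}\to v\to y_2^{\iota(v)}$, and leave the unused absorbers untouched; the result is a directed path $P$ in $D$ with $V(P)=V(A)\cup U$ and the same endpoints as $A$. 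The only real work is the parameter choreography: once $\beta\ll\beta_2\ll\alpha_2\ll\beta_1\ll\alpha_1\ll\alpha\ll\ve$ is in place, everything reduces to routine counting and to the two already-established Lemmas~\ref{lem:connect} and~\ref{lem:ntd-absorbing}.
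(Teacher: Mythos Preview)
Your proof is correct and follows essentially the same absorbing-path strategy as the paper: build local absorber gadgets, apply \Cref{lem:ntd-absorbing} to select a small tuple-disjoint family rich for every vertex, link the gadgets into a path, then greedily absorb $U$ one vertex at a time. The paper's version is a little leaner in two respects: it uses $2$-tuples $(v,w)$ with $\overrightarrow{vu},\overrightarrow{uw},\overrightarrow{vw}\in A(D)$ rather than $3$-tuples (your vertex $y_3$ is never touched during absorption and can be dropped), and it connects consecutive gadgets directly via the raw bound $d_D(x,y)\ge 2\ve n$ instead of first invoking \Cref{lem:connect} to build a reservoir~$R$, so your extra parameter layer $\beta_1\ll\alpha_1$ is unnecessary.
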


\begin{proof}[Proof of \Cref{lem:absorber}]
    For a vertex $u\in V(D)$, we say a tuple $(v, w)\in V(D)\times V(D)$ is a \emph{good tuple} for $u$ if $v \neq w$ and $\overrightarrow{vu}, \overrightarrow{uw}, \overrightarrow{vw}\in A(D).$ We claim that every vertex of $D$ has many good tuples.
    
    \begin{claim}\label{clm:many-transitive-trianlge}
        For every $u\in V(D)$, there are at least $4\ve^2 n^2$ good tuples for $u.$
    \end{claim}

    \begin{claimproof}[Proof of \Cref{clm:many-transitive-trianlge}]
        Let $U$ be a subset of $V(D)$ with size at least $\left(\frac{1}{2} + \ve \right)n$ and $U'$ be a subset of $U$ with size at least $2\ve n.$ Then for every $u'\in U'$, we have $|N^+_D(u')\cap U| \geq 2\ve n.$ Thus, the inequality $|A_D[U', U]| = \sum_{u'\in U'} |N^+_D(u')\cap U| \geq 4\ve^2 n^2$ holds. Take $U = N^+(u)$ and $U' = U\cap N^-(u).$ Then we have $|U| \geq \left(\frac{1}{2} + \ve \right)n$ and $|U'| \geq 2\ve n.$ Then for every $\overrightarrow{vw}\in A_D[U', U]$, the tuple $(v, w)$ is a good tuple for $u.$ Since $A_D[U', U] \geq 4\ve^2 n^2$, the claim holds.  
    \end{claimproof}
    
    Let $X = Y =V(D)$ and let $\calP = \{(u, (v, w)) \in X\times Y^2: (v, w) \text{ is a good tuple for } u\}.$ Then by \Cref{clm:many-transitive-trianlge}, $\calP$ is an $\left(n, 2, 4\ve^2 \right)$-tuple system on $(X, Y).$ Thus, by \Cref{lem:ntd-absorbing}, there is a set $\calY\subseteq V(D)^2$ with size at most $\frac{\alpha n}{3}$ such that elements in $\calY$ are pairwise tuple-disjoint and for every $u\in V(D)$, we have $|\{(v, w)\in \calY: (u, (v, w))\in \calP\}| \geq \beta n.$ Let $\calY = \{(v_1, w_1), \dots, (v_{\ell}, w_{\ell})\}.$ Since we have $d_D(x, y) \geq 2\ve n \geq \alpha n \geq 3|\calY|$ for every pair of distinct vertices $(x, y)$, we can greedily choose distinct vertices $x_1, \dots, x_{\ell-1}$ which are disjoint from $\{v_1,\dots, v_{\ell}, w_1, \dots, w_{\ell}\}$ such that $\overrightarrow{w_ix_i}, \overrightarrow{x_iv_{i+1}} \in A(D)$ for each $i \in [\ell - 1].$ This implies that there is a directed path $A = v_1w_1x_1v_2w_2x_2 \cdots x_{\ell - 1}v_{\ell}w_{\ell}$ in $D.$ 
    
    We now show that $A$ is a desired directed path. Note that $|V(A)| \leq 3|\calY| \leq \alpha n.$ Let $U \subseteq V(D)\setminus V(A)$ be a vertex set with size at most $\beta n.$ By our choice of $\calY$, for each $u\in U$, there are at least $\beta n$ pairs of $(v_i, w_i)$ such that $\overrightarrow{v_iu}, \overrightarrow{uw_i} \in A(D).$ Thus, we can greedily absorb all the vertices of $U$ by replacing the arc $\overrightarrow{v_iw_i}$ with $\overrightarrow{v_i u}, \overrightarrow{uw_i}$ and still maintain the same endpoints $v_1$ and $w_{\ell}.$ This proves the lemma.  
\end{proof}

The remaining proof ingredient is the connector. The next lemma shows that we also have a connector.

\begin{lemma}\label{lem:connect}
    Let
    $$0 < \frac{1}{n} \ll \beta \ll \alpha \ll \ve < 1.$$

    Let $D$ be an $n$-vertex digraph with $\delta^0(D) \geq \left(\frac{1}{2} + \ve \right)n.$ Then there is a set $R \subseteq V(D)$ with size $|R| \leq \alpha n$ such that for any distinct vertices $u, v \in V(D)$, we have $|N_D(u, v)\cap R| \geq \beta n.$
\end{lemma}

\begin{proof}[Proof of \Cref{lem:connect}]
    Let $X = V(D)\times V(D)\setminus \{(v, v): v\in V(D)\}$ and let $Y = V(D).$ Let $\calP  = \{((u, v), w)\in X\times Y: w\in N_D(u, v)\}.$ By the minimum semi-degree condition, for every distinct pair of vertices $(u, v)$ of $V(G)$, we have $d_D(u, v) \geq 2\ve n.$ This implies that $\calP$ is an $(n, 1, 2\ve)$-tuple system on $(X, Y).$ By \Cref{lem:ntd-absorbing}, there is $\calY \subseteq Y$ with $|\calY| \leq \alpha n$ and for every $(u, v) \in X$, we have $|\{w\} \in \calY: ((u, v), w) \in \calP| \geq \beta n.$ This implies that $R = \calY$ is the desired set.
\end{proof}

We now prove \Cref{thm:main} by using \Cref{lem:absorber,lem:connect}. Our proof produces a spanning $H$-subdivision that contains a very long directed path, but it is easy to verify that we can also obtain an almost balanced spanning $H$-subdivision by slightly modifying the argument.

\begin{proof}[Proof of \Cref{thm:main}]
    Let $$0 < \frac{1}{C} \ll \gamma \ll \beta \ll \alpha \ll \ve < 1.$$
    Let $D$ be an $n$-vertex digraph with $\delta^0(D) \geq \left(\frac{1}{2} + \ve \right)n$ where $n\geq Cm.$ Let $H$ be an $m$-arc digraph without isolated vertices. By \Cref{lem:absorber}, there is a path $A$ in $D$ that satisfies the \Cref{lem:absorber} with parameters $\alpha$ and $\beta.$ Let $w_1, w_2$ be the endpoints of the path $A.$ Then we have $|V(A)| \leq \alpha n$ and the following holds.
    For every $U\subseteq V(D)\setminus V(A)$ of size at most $\beta n$, there is a directed path $P$ in $D$ which starts at $w_1$ and ends at $w_2$ where $V(P) = V(A) \cup U.$ 

    Let $D_1 \defeq D - V(A) + \{w_1, w_2\}.$ Then we have $\delta^0(D_1) \geq \left(\frac{1}{2} + \frac{\ve}{2} \right)n.$ By \Cref{lem:connect}, there is a set $R \subseteq V(D_1)$ that satisfies the conclusions of \Cref{lem:connect} with parameters $\frac{\ve}{2}$, $\beta$, $2\gamma.$ If $R \cap \{w_1, w_2\} \neq \emptyset$, then we simply remove $w_1$ and $w_2$ from $R.$ Then $R$ is disjoint from $\{w_1, w_2\}$ and we have $|R| \leq \beta n$ and for any two distinct vertices $x, y \in V(D_1)$, we have $|N_D(x, y)\cap R| \geq \gamma n.$ Let $D_2\defeq D - (V(A)\cup R).$ Then we have $\delta^0(D_2) \geq \left(\frac{1}{2} + \frac{\ve}{4}\right) n.$

    Let $V(H) = \{x_1, \dots, x_k\}.$ We arbitrary choose vertices $v_1, \dots v_k$ in $D_2.$ Our plan is embed $x_i$ to $v_i$ as a branch vertex for each $i\in [k].$ Let $D_3\defeq D_2 - \{v_1,\dots v_k\}.$ Note that $k \leq 2m \leq \frac{\ve}{4}Cm \leq \frac{\ve}{4}n.$ Thus, we have $\delta^0(D_3) \geq \frac{n}{2}.$ By \Cref{thm:digraph-dirac}, the digraph $D_3$ has a directed Hamiltonian path that starts from $u_1$ and ends at $u_2.$ Without loss of generality, we may assume that $\overrightarrow{x_1x_2}\in A(H).$ By our choice of $R$, there are three distinct vertices $z_1, z_2, z_3 \in R$ such that $\overrightarrow{v_1z_1}, \overrightarrow{z_1u_1}, \overrightarrow{u_2z_2}, \overrightarrow{z_2w_1}, \overrightarrow{w_2z_3}, \overrightarrow{z_3v_2} \in A(D).$ This implies that there is a directed path $Q$ starts from $v_1$ and ends at $v_2$ such that $Q$ contains the directed path $A$, all vertices of $D_3$, and exactly three elements of $R.$ 

    Let $A(H)\setminus \{\overrightarrow{x_1x_2}\} = \{e_1, \dots, e_{m-1}\}.$ Since $\gamma n \geq \gamma C m \geq m + 2$, for each $i\in [m-1]$ where $e_i = \overrightarrow{x_ax_b}$, we can greedily choose distinct vertices $z_{e_i}\in R,$ disjoint from $\{z_1, z_2, z_3\},$ such that $\overrightarrow{v_a z_i}, \overrightarrow{z_iv_b} \in E(G).$ Thus, we can obtain a subdivision of $H$ that contains the directed path $A$, and the remaining vertices are contained in $R.$ Since $|R| \leq \beta n$, by our choice of $A$, we can absorb the remaining vertices, hence we obtain a spanning subdivision of $H$ in $D.$ This completes the proof.

\end{proof}


\printbibliography
\end{document}